\documentclass[12pt]{article}
\usepackage[utf8]{inputenc}
\usepackage{amsmath, amssymb, amsthm}
\usepackage{enumitem}
\usepackage{tikz}
\usetikzlibrary{positioning}
\usepackage{hyperref}
\usepackage{geometry}
\usepackage{graphicx}
\usepackage[T2A]{fontenc}
\usepackage[utf8]{inputenc}
\usepackage[russian,english]{babel}
\usepackage{authblk} 
\newtheorem{theorem}{Theorem}[section]
\newtheorem{lemma}[theorem]{Lemma}

\newtheorem{corollary}[theorem]{Corollary}

\newtheorem{remark}[theorem]{Remark}
\newtheorem{conjecture}[theorem]{Conjecture} 

\DeclareMathOperator{\GW}{GW}
\newcommand{\prob}{\mathbb{P}}  
\DeclareMathOperator{\E}{\mathbb{E}}
\DeclareMathOperator{\Var}{Var}
\DeclareMathOperator{\Cov}{Cov}

\newcommand{\ltr}{\textsc{ltr}}
\newcommand{\rtl}{\textsc{rtl}}

\title{Greedy Routing in a Sequentially Grown One-Dimensional Random Graph}
\author{Alexander Ponomarenko}
\affil{Laboratory of Algorithms and Technologies for Network Analysis, HSE University, Nizhny Novgorod, Russia}
\date{\today}

\begin{document}

\maketitle

\begin{abstract}
We analyze greedy routing in a random graph $G_n$ constructed on the vertex set $V = \{1, 2, \dots, n\}$ embedded in $\mathbb{Z}$.
Vertices are inserted according to a uniform random permutation $\pi$, and each newly inserted vertex connects to its nearest already-inserted neighbors on the left and right (if they exist).
This work addresses a conjecture originating from empirical studies~\cite{Ponomarenko2011, malkov2012scalable, пономаренко2012структура}, which observed through simulations that greedy search in sequentially grown graphs exhibits logarithmic routing complexity across various dimensions.
While the original claim was based on experiments and geometric intuition, a rigorous mathematical foundation remained open.
Here, we formalize and resolve this conjecture for the one-dimensional case.
For a greedy walk $\GW$ starting at vertex $1$ targeting vertex $n$---which at each step moves to the neighbor closest to $n$---we prove that the number of steps $S_n$ required to reach $n$ satisfies $S_n = \Theta(\log n)$ with high probability.
Precisely, $S_n = L_n + R_n - 2$, where $L_n$ and $R_n$ are the numbers of left-to-right and right-to-left minima in the insertion-time permutation.
Consequently, $\E[S_n] = 2H_n - 2 \sim 2\log n$ and $\prob(S_n \geq (2+c)\log n) \leq n^{-h(c/2) + o(1)}$ for any constant $c > 0$, with an analogous lower tail bound for $0 < c < 2$, where $h(u) = (1+u)\ln(1+u) - u$ is the Bennett rate function.
Furthermore, we establish that this logarithmic scaling is robust: for arbitrary or uniformly random start--target pairs, the expected routing complexity remains $\E[S_{s,t}] = 2\log n + O(1)$, closely mirroring decentralized routing scenarios in real-world networks where endpoints are chosen dynamically rather than fixed a priori.
\end{abstract}

\section{Introduction}
\label{sec:introduction}
The existence of networks capable of efficiently transmitting and routing information was first highlighted by Stanley Milgram's famous 1967 experiment on small-world phenomena~\cite{Milgram67}. Several random graph models have since been proposed to explain this phenomenon. The most widely adopted, due to its rigorous theoretical foundation, is Kleinberg's small-world model~\cite{Kleinberg00}. Kleinberg's model has served as a basis for designing peer-to-peer and sensor networks that guarantee logarithmic routing, and its generalizations have been extensively studied in works such as~\cite{prokhorenkova2020graph}. Recent efforts by Prokhorenkova and Shekhovtsov~\cite{prokhorenkova2020graph} have further bridged the gap between practice and theory by analyzing the navigability of graph-based nearest neighbor search structures, though often under simplifying assumptions.

Despite the thorough analysis of Kleinberg's model, it does not explain \emph{why} or \emph{how} navigational effects emerge in natural networks. It is difficult to imagine that individuals in social networks form connections by computing connection probabilities inversely proportional to distance raised to a power $d$ (the ambient dimension). The model is inherently static and somewhat artificial.

In contrast, dynamic models based on network growth offer a more natural explanation. In 2011, Ponomarenko et al.~\cite{Ponomarenko2011} introduced a growth-based graph construction algorithm specifically designed for approximate nearest neighbor search, demonstrating that navigable properties arise as a direct consequence of the network's sequential formation. Their experimental study explicitly claimed that ``Simulation results confirm logarithmic dependency of search complexity from the number of elements in the structure.'' Subsequent work~\cite{malkov2012scalable, malkov2014approximate} systematically investigated the average hop count induced by greedy walks across Euclidean spaces of different dimensionalities, further reinforcing the empirical observation of polylogarithmic routing behavior. Together, these studies established an informal conjecture: greedy routing in sequentially grown $k$-nearest-neighbor graphs operates in polylogarithmic (and likely logarithmic) time. However, these conclusions were primarily based on numerical experiments and geometric intuition, lacking a fully rigorous mathematical proof.

Analyzing the general multidimensional case presents a significant technical challenge due to complex spatial dependencies and the absence of a natural linear ordering. As a foundational step toward this broader goal, we focus on rigorously solving the one-dimensional case. This setting captures the core combinatorial mechanisms of sequential nearest-neighbor attachment while remaining analytically tractable, and serves as the necessary logical bridge between empirical observations and a complete theory for higher-dimensional continuous spaces.

In this paper, we take this step by providing a rigorous analysis of greedy routing in a natural random graph model on the one-dimensional integer lattice. Vertices $\{1,\dots,n\}$ are embedded in $\mathbb{Z}$ and inserted sequentially in a uniformly random order. Upon insertion, each vertex connects to its nearest already-inserted neighbors on the left and right (if they exist). This construction yields a random geometric graph with non-local edges whose structure is governed by permutation statistics.

Let $\GW$ denote the greedy walk targeting vertex $n$, starting at vertex $1$. At each step from vertex $x$, $\GW$ moves to the neighbor $y \in N_G(x)$ that minimizes $|y - n|$ (i.e., the neighbor geometrically closest to the target). The walk terminates upon reaching $n$. We investigate the random variable $S_n$, denoting the number of steps required.

\medskip
\noindent\textbf{Main result.} The number of steps satisfies
\[
S_n = L_n + R_n - 2,
\]
where $L_n$ is the number of left-to-right minima and $R_n$ the number of right-to-left minima in a uniform random permutation of $\{1,\dots,n\}$. Consequently:
\begin{enumerate}[label=(\roman*)]
\item $\E[S_n] = 2H_n - 2 = 2\log n + O(1)$, where $H_n$ is the $n$-th harmonic number,
\item $\Var(S_n) = 2\log n + O(1)$,
\item For any constant $c > 0$, $\prob(S_n \geq (2+c)\log n) \leq n^{-h(c/2) + o(1)}$, and for $0 < c < 2$, $\prob(S_n \leq (2-c)\log n) \leq n^{-h(-c/2) + o(1)}$, where $h(u) = (1+u)\ln(1+u) - u$.
\end{enumerate}
Thus $S_n = \Theta(\log n)$ with high probability (w.h.p.), confirming and strengthening the conjecture that $S_n$ is polylogarithmic.

\medskip
\noindent\textbf{Technical contribution.} The key insight is a combinatorial characterization: the greedy path from $1$ to $n$ traverses exactly the left-to-right minima until reaching the global minimum position, then follows the right-to-left minima to $n$. This reduces the routing problem to classical permutation statistics, bypassing intricate edge-probability calculations. Our analysis leverages the independence of record indicators in random permutations to establish sharp concentration.

\medskip
\noindent\textbf{Related work.} 
Kleinberg's seminal small-world model~\cite{Kleinberg00} demonstrates that augmenting a $d$-dimensional lattice with long-range edges drawn from an inverse-power law (with decay exponent $\alpha=d$) enables greedy routing in $\Theta(\log^2 n)$ steps. Martel and Nguyen~\cite{Martel04} later established that this $\Theta(\log^2 n)$ bound is tight for all fixed dimensions $d \geq 1$, including the 1D case. 
Structured peer-to-peer overlay networks, such as Chord~\cite{Stoica2001}, Kademlia~\cite{Maymounkov2002}, and Tapestry~\cite{Zhao2004}, also achieve $O(\log n)$ routing complexity. However, these systems rely on engineered identifier spaces (e.g., consistent hashing or XOR metrics) and explicit maintenance of $O(\log n)$ routing table entries per node. 
Our model differs fundamentally: edges emerge from a sequential nearest-neighbor insertion process rather than a predefined spatial probability distribution or engineered overlay, yet it achieves strictly faster $\Theta(\log n)$ greedy routing without explicit routing tables. 
While our construction can be interpreted as a geometric variant of preferential attachment---where early-inserted vertices implicitly accumulate connections through spatial proximity rather than explicit degree-biased rules---it is important to contextualize this within the broader mathematical literature. The original Barab\'asi--Albert framework was highly influential as a conceptual pioneer, though its formal mathematical properties were later rigorously established in works such as Bollob\'as's survey on scale-free random graphs~\cite{BollobasHandbook} and the generalized attachment models of Ostroumova et al.~\cite{Ostroumova2013}. These mathematical developments primarily target \emph{structural} network characteristics, notably power-law degree distributions and tunable clustering coefficients. Crucially, they do not address \emph{navigability} or the efficiency of decentralized greedy routing. Our model shifts the analytical focus from structural emulation to algorithmic performance, demonstrating that a purely local, geometry-driven growth process implicitly generates the long-range edges required for $\Theta(\log n)$ routing, without any degree-proportional attachment mechanism.
Beyond the canonical $1 \to n$ routing, our analysis extends naturally to arbitrary source--target pairs. As shown in Corollary~\ref{cor:random-st}, when the start and target vertices are chosen uniformly at random, the expected routing complexity remains $\E[S_{s,t}] = 2\log n + O(1)$. This finding is particularly significant for modeling real-world decentralized networks, where endpoints are rarely fixed in advance but instead arise dynamically from user queries, peer interactions, or distributed lookup protocols. The persistence of logarithmic hop counts under random endpoint selection underscores the intrinsic navigability of sequentially grown nearest-neighbor graphs and reinforces the practical relevance of our theoretical guarantees.
The combinatorial reduction to permutation records connects our analysis directly to the well-established theory of \emph{Cartesian trees} and \emph{treaps}. Vuillemin~\cite{Vuillemin80} introduced Cartesian trees as a binary tree structure where an inorder traversal recovers the original sequence order, while the heap property is satisfied with respect to an associated priority value. When priorities are assigned via a uniform random permutation (equivalently, random insertion times), the resulting structure is a treap, as formalized by Aragon and Seidel~\cite{AragonSeidel89}. Our sequentially grown graph $G_n$ implicitly encodes precisely this Cartesian tree: the parent of any vertex $x$ in the tree corresponds to the first already-inserted neighbor of $x$ on either side, and the global minimum (inserted first) forms the root. The greedy routing path from $1$ to $n$ exactly traces the unique tree path from the leftmost leaf to the rightmost leaf through the root. While the structural and search-time properties of treaps and Cartesian trees have been thoroughly analyzed in the context of randomized search trees and algorithmic complexity~\cite{Devroye86, SeidelA96}, our contribution shifts the analytical lens to \emph{decentralized greedy routing} on the induced geometric graph. Rather than studying tree height or pointer-based search path length, we characterize the spatial hop count of a distance-minimizing greedy walk. This reframing bridges classical permutation statistics with modern geometric routing theory and provides a rigorous foundation for the empirically observed navigability of growth-based nearest-neighbor structures~\cite{malkov2016growing}.

\section{Model and Preliminaries}
\label{sec:model}

\subsection{Graph construction}

Let $V = \{1, 2, \dots, n\} \subset \mathbb{Z}$ be vertices embedded on the integer line. Let $\pi = (\pi(1), \pi(2), \dots, \pi(n))$ be a uniform random permutation of $V$, representing the insertion order. Define the \emph{insertion time function} $\tau: V \to \{1,\dots,n\}$ by $\tau(x) = t$ iff $\pi(t) = x$. Thus $\tau$ is also a uniform random permutation.

The graph $G_n = (V, E)$ is constructed incrementally:
\begin{itemize}
\item Initialize $G_n$ with vertex set $V$ and $E = \emptyset$.
\item For $t = 1$ to $n$:
  \begin{itemize}
  \item Insert vertex $x = \pi(t)$.
  \item If there exists $y < x$ already inserted (i.e., $\tau(y) < t$), connect $x$ to the \emph{closest} such $y$ (maximizing $y$ subject to $y < x$ and $\tau(y) < t$).
  \item If there exists $y > x$ already inserted, connect $x$ to the \emph{closest} such $y$ (minimizing $y$ subject to $y > x$ and $\tau(y) < t$).
  \end{itemize}
\end{itemize}
Edges are undirected. Note that adjacent vertices $i$ and $i+1$ always share an edge (vacuously, as no vertex lies strictly between them).

\subsection{Permutation records}

For a permutation $\sigma$ of $\{1,\dots,n\}$:
\begin{itemize}
\item A position $i$ is a \emph{left-to-right minimum} (\ltr minimum) if $\sigma(i) < \min\{\sigma(1), \dots, \sigma(i-1)\}$ (with $i=1$ always an \ltr minimum).
\item A position $i$ is a \emph{right-to-left minimum} (\rtl minimum) if $\sigma(i) < \min\{\sigma(i+1), \dots, \sigma(n)\}$ (with $i=n$ always an \rtl minimum).
\end{itemize}
Let $L_n(\sigma)$ and $R_n(\sigma)$ denote the counts of \ltr and \rtl minima, respectively. For uniform random $\sigma$:
\begin{equation}
\label{eq:record-indicators}
L_n = \sum_{i=1}^n I_i, \quad \text{where } I_i \sim \text{Bernoulli}(1/i) \text{ are independent}.
\end{equation}
Similarly for $R_n$ (by symmetry). Hence $\E[L_n] = H_n = \sum_{i=1}^n 1/i = \log n + \gamma + o(1)$ and $\Var(L_n) = \sum_{i=1}^n (1/i)(1-1/i) = \log n + O(1)$.

\section{Structural Characterization of Edges}
\label{sec:structure}

We first characterize when an edge $\{x,y\}$ with $x < y$ exists in $G_n$.

\begin{lemma}[Edge existence criterion]
\label{lem:edge-criterion}
For $1 \leq x < y \leq n$, the edge $\{x,y\}$ belongs to $E$ if and only if
\[
\max\{\tau(x), \tau(y)\} < \min_{z \in (x,y)} \tau(z),
\]
where $(x,y) = \{x+1, \dots, y-1\}$. Equivalently, $x$ and $y$ are the two vertices with smallest insertion times in the interval $[x,y]$.
\end{lemma}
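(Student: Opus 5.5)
We prove both directions directly from the construction, keeping track of which of $x,y$ is inserted later; by symmetry we may assume the later one is $y$, i.e.\ $\tau(x)<\tau(y)$. The case $\tau(y)<\tau(x)$ is the mirror image, with left and right swapped. The key observation is that an edge $\{x,y\}$ can only be created at time $\max\{\tau(x),\tau(y)\}$, when the later of the two endpoints is inserted. Edges are added only at the insertion of one endpoint, and at that moment the other endpoint must already be present.

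For the forward direction, suppose $\{x,y\}\in E$. Every edge incident to a vertex at its own insertion goes to an already-inserted vertex. So the edge was created when $y$ was inserted, at time $t=\tau(y)$, and $x$ was chosen as the closest already-inserted vertex to the left of $y$. By the rule ``maximize $x$ subject to $x<y$ and $\tau(x)<t$,'' no $z\in(x,y)$ satisfies $\tau(z)<\tau(y)$. Since $\tau$ is a bijection, this gives $\tau(z)>\tau(y)=\max\{\tau(x),\tau(y)\}$ for all $z\in(x,y)$.

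One point needs care: could the edge instead have been created at the insertion of $x$? That would require $x$ to connect to $y$ at time $\tau(x)$, with $y$ already inserted. This is impossible because $\tau(y)>\tau(x)$. So the edge is created exactly once, at the later insertion, and the argument above is complete.

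For the converse, assume $\max\{\tau(x),\tau(y)\}<\min_{z\in(x,y)}\tau(z)$, again with $\tau(x)<\tau(y)$. At time $t=\tau(y)$, the vertex $x$ is already inserted and lies to the left of $y$. No vertex of $(x,y)$ is inserted yet, since all of them have insertion time greater than $t$. Hence $x$ is the maximal already-inserted vertex below $y$, and the construction adds the edge $\{x,y\}$. When $y=x+1$ the interval is empty, the minimum over it is $+\infty$, and the criterion holds vacuously, matching the remark that consecutive vertices are always adjacent.

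The equivalent reformulation follows directly. Since $\tau$ is injective, the condition says that every $z\in(x,y)$ has a larger insertion time than both $x$ and $y$. This is exactly the statement that $x$ and $y$ are the two vertices of $[x,y]$ with the smallest insertion times. We expect no real obstacle; the only subtle point is the justification that the edge is created at the later endpoint's insertion, which relies on distinct insertion times.
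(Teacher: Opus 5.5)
Your proof is correct and takes essentially the same route as the paper's. You assume $\tau(x)<\tau(y)$, note that the edge can only arise when $y$ is inserted and picks $x$ as its nearest already-inserted left neighbor, and conclude that no $z\in(x,y)$ may have $\tau(z)<\tau(y)$. Your explicit check that the edge cannot be created at $x$'s insertion, and your handling of the empty case $y=x+1$, spell out points the paper's proof leaves implicit.
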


\begin{proof}
Suppose $\tau(x) < \tau(y)$ (the case $\tau(y) < \tau(x)$ is symmetric). When $y$ is inserted at time $\tau(y)$, the already-inserted vertices in $[x,y]$ are precisely $\{z \in [x,y] : \tau(z) < \tau(y)\}$. Vertex $y$ connects to the closest such vertex on the left, which is $\max\{z < y : \tau(z) < \tau(y)\}$. This equals $x$ iff no vertex in $(x,y)$ has insertion time less than $\tau(y)$, i.e., $\tau(y) < \min_{z \in (x,y)} \tau(z)$. Since $\tau(x) < \tau(y)$ by assumption, this is equivalent to $\max\{\tau(x),\tau(y)\} = \tau(y) < \min_{z \in (x,y)} \tau(z)$.

Conversely, if the inequality holds, then at time $\max\{\tau(x),\tau(y)\}$, the later-inserted vertex connects to the earlier one as its nearest neighbor on the appropriate side, creating edge $\{x,y\}$.
\end{proof}

\begin{corollary}
\label{cor:adjacent-edges}
For all $1 \leq i < n$, edge $\{i, i+1\}$ exists in $G_n$ (since $(i,i+1) = \emptyset$).
\end{corollary}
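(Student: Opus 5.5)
This follows directly from Lemma~\ref{lem:edge-criterion}, applied with $x = i$ and $y = i+1$. Since $i < i+1$, the lemma states that $\{i,i+1\} \in E$ if and only if
\[
\max\{\tau(i),\tau(i+1)\} < \min_{z \in (i,i+1)} \tau(z).
\]
There is no integer strictly between $i$ and $i+1$, so the index set $(i,i+1) = \{i+1,\dots,i\}$ is empty. By the usual convention $\min_{z\in\emptyset}\tau(z) = +\infty$, and the right-hand side is $+\infty$. The left-hand side is at most $n$, so the inequality holds for every permutation $\tau$. Hence the edge $\{i,i+1\}$ is present deterministically, for every $1 \le i < n$.

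The only point needing care is the empty-minimum convention, since the lemma's displayed inequality is stated with a minimum over $(x,y)$. I will therefore also check the claim against the lemma's proof, or the construction itself, which avoids that convention.

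Assume without loss of generality that $\tau(i) < \tau(i+1)$; the other case is symmetric. When $i+1$ is inserted at time $\tau(i+1)$, vertex $i$ is already present. The construction connects $i+1$ to $\max\{z < i+1 : \tau(z) < \tau(i+1)\}$. This set contains $i$, and every element of it is at most $i$, so the maximum is exactly $i$.

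The same argument gives the edge when $\tau(i+1) < \tau(i)$, by taking the closest inserted vertex to the right of $i$ at time $\tau(i)$. There is no real obstacle here; the statement is a degenerate instance of the lemma.
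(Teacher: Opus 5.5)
Your argument is correct and is exactly the paper's: the corollary is the degenerate case $y = x+1$ of Lemma~\ref{lem:edge-criterion}, where the minimum over the empty interval $(i,i+1)$ makes the condition vacuous. Your second check, made directly from the construction, is also valid and avoids the empty-minimum convention, but it only confirms the same argument rather than giving a different route.
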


Thus $G_n$ always contains the path $1-2-\cdots-n$, ensuring connectivity and that greedy walks never terminate prematurely.

Figure~\ref{fig:graph16} illustrates a random instance with $n = 16$. The axes place coordinate $x$ horizontally and insertion time $\tau(x)$ vertically, so the edge criterion of Lemma~\ref{lem:edge-criterion} has a direct geometric reading: edge $\{x,y\}$ exists iff no vertex between $x$ and $y$ lies \emph{above} both endpoints in the plot (i.e., has a smaller $\tau$-value).

\begin{figure}[t]
\centering
\includegraphics[width=1.0\textwidth]{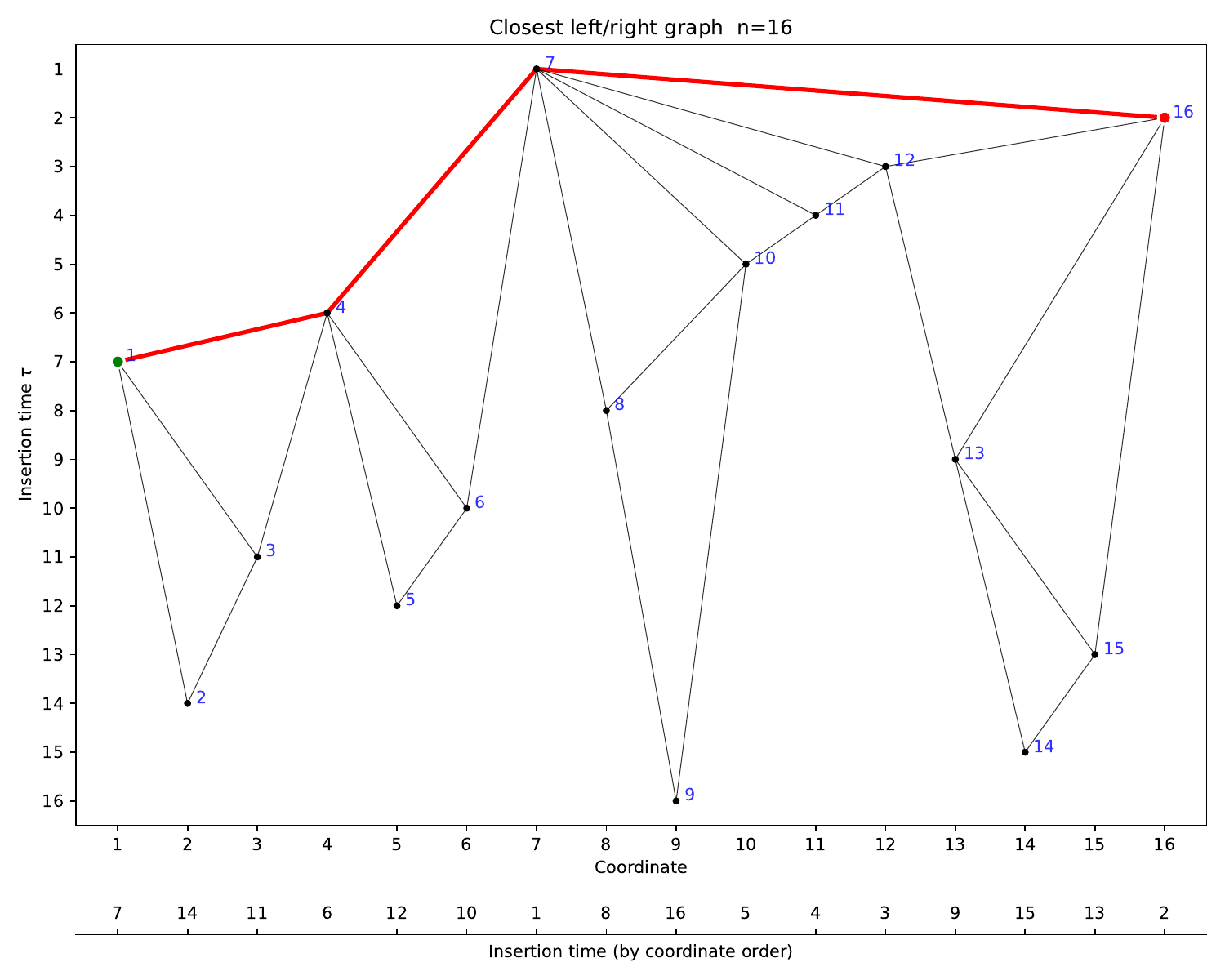}
\caption{The graph $G_{16}$ for the permutation $\tau = (7,14,11,6,12,10,1,8,16,5,4,3,9,15,13,2)$
(insertion times listed by vertex $1,2,\dots,16$). Vertex~$7$ is inserted first ($\tau(7)=1$) and
is the global minimum. The \ltr minima of $\tau$ are $\{1,4,7\}$ (vertices with
$\tau$-values $7>6>1$, each setting a new record scanning left to right), and the \rtl
minima are $\{7,16\}$. By Theorem~\ref{thm:greedy-path}, the greedy walk from $1$ to $16$ follows
$1 \to 4 \to 7 \to 16$, taking $S_{16} = 3 + 2 - 2 = 3$ steps.}
\label{fig:graph16}
\end{figure}

\section{Greedy Walk Dynamics}
\label{sec:greedy-walk}

Fix target $q = n$ and start vertex $x_0 = 1$. At current vertex $x < n$, the greedy walk moves to
\[
\GW(x) = \arg\min_{y \in N_G(x)} |y - n| = \arg\max_{y \in N_G(x) \cap (x,n]} y,
\]
since $|y-n| = n-y$ is decreasing in $y$. Thus $\GW$ always moves rightward to the rightmost neighbor.

\subsection{Neighborhood structure}

For $x \in V$, define:
\begin{align*}
r(x) &= \min\{y > x : \tau(y) < \tau(x)\} \quad \text{(next smaller insertion time to the right)}, \\
\ell(x) &= \max\{y < x : \tau(y) < \tau(x)\} \quad \text{(previous smaller insertion time to the left)}.
\end{align*}
If no such $y$ exists, the value is undefined. Let $m = \arg\min_{z \in V} \tau(z)$ be the position of the global minimum (inserted first). We call $y \in (x,n]$ a \emph{right-record of $x$} if $\tau(y) < \min_{z \in (x,y)} \tau(z)$, i.e., $y$ is an \ltr minimum when scanning $(x,n]$ from left to right.

\begin{lemma}[Rightmost LTR minimum is the earliest-inserted suffix vertex]
\label{lem:rightmost-ltr-is-argmin}
For any $x < n$, if $\tau(x) = \min_{z \in [x,n]} \tau(z)$ (i.e., $r(x)$ is undefined), then the rightmost right-record of $x$ is
\[
\arg\min_{y > x} \tau(y).
\]
\end{lemma}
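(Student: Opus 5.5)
The plan is to argue directly from the definition of right-record; the hypothesis $\tau(x)=\min_{[x,n]}\tau$ actually plays no role beyond $x<n$. Set $y^* = \arg\min_{y>x}\tau(y)$. This is well defined because $x<n$ makes $(x,n]$ nonempty and $\tau$ is injective. I will show two things: $y^*$ is a right-record of $x$, and no $y>y^*$ is a right-record of $x$. Together these say that $y^*$ is the rightmost right-record.

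For the first claim, recall that $y^*$ is a right-record iff $\tau(y^*) < \min_{z\in(x,y^*)}\tau(z)$. Every $z\in(x,y^*)$ satisfies $z>x$ and $z\neq y^*$. Since $y^*$ is the unique minimizer of $\tau$ over $(x,n]$, we get $\tau(z)>\tau(y^*)$. If $(x,y^*)$ is empty, the condition holds vacuously, with the usual convention that a minimum over the empty set is $+\infty$.

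For the second claim, take any $y\in(y^*,n]$. Then $y^*\in(x,y)$. By minimality and injectivity, $\tau(y^*)<\tau(y)$. Hence $\min_{z\in(x,y)}\tau(z)\le\tau(y^*)<\tau(y)$, so the defining inequality fails and $y$ is not a right-record.

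The only point needing care is the bookkeeping. We must check that the minimizer is unique, which follows because $\tau$ is a permutation, and we must handle the empty-interval convention for $y=x+1$. I do not expect a genuine obstacle. It is worth remarking in the text that the condition that $r(x)$ is undefined is not needed for this statement itself. That condition matters only later: combined with Lemma~\ref{lem:edge-criterion}, it identifies the rightmost neighbor of $x$, which is the greedy step, with this right-record.
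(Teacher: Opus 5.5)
Your proof is correct and follows the paper's argument: $y^*=\arg\min_{y>x}\tau(y)$ satisfies the right-record condition because it minimizes $\tau$ over all of $(x,n]$, and every $y>y^*$ fails it because $y^*\in(x,y)$ has a smaller insertion time. You are also right that the hypothesis $\tau(x)=\min_{z\in[x,n]}\tau(z)$ is never used; the paper's proof derives $\tau(y)>\tau(x)$ from it but does not need that fact.
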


\begin{proof}
Since $\tau(x)$ is the global minimum on $[x,n]$, every $y \in (x,n]$ satisfies $\tau(y) > \tau(x)$. Recall that $y \in (x,n]$ is an right-record of $x$ iff $\tau(y) < \min_{z \in (x,y)} \tau(z)$. Let $y^* = \arg\min_{y > x} \tau(y)$. For any $y > y^*$, we have $\tau(y^*) < \tau(y)$ and $y^* \in (x, y)$, so $y$ fails this condition and is not a right-record of $x$. Conversely, $y^*$ itself satisfies the condition since it achieves the minimum over all of $(x,n]$. Hence $y^*$ is the rightmost right-record of $x$.
\end{proof}

\begin{lemma}[Right-neighbor characterization]
\label{lem:right-neighbors}
For $x < n$, the right-neighbors of $x$ are precisely the right-records of $x$ that lie in $[x+1, r(x)]$ if $r(x)$ exists, or all right-records of $x$ if $r(x)$ is undefined. Moreover, the largest right-neighbor is:
\[
\max \bigl(N_G(x) \cap (x,n]\bigr) =
\begin{cases}
r(x) & \text{if } r(x) \text{ exists}, \\
\arg\min_{y > x} \tau(y) & \text{otherwise}.
\end{cases}
\]
In the second case, $\arg\min_{y>x}\tau(y)$ is the rightmost right-neighbor because it is the rightmost right-record of $x$ (Lemma~\ref{lem:rightmost-ltr-is-argmin}).
\end{lemma}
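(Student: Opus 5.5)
By Lemma~\ref{lem:edge-criterion}, for $y>x$ the edge $\{x,y\}$ exists iff $\max\{\tau(x),\tau(y)\} < \min_{z\in(x,y)}\tau(z)$. We read this as the conjunction of two conditions: (a) $\tau(y)<\min_{z\in(x,y)}\tau(z)$, which says exactly that $y$ is a right-record of $x$; and (b) $\tau(x)<\min_{z\in(x,y)}\tau(z)$, which says that no vertex strictly between $x$ and $y$ was inserted before $x$. The plan is to show that, under (a), condition (b) is equivalent to $y\le r(x)$ when $r(x)$ exists, and holds automatically when $r(x)$ is undefined.

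\emph{First description.} If $r(x)$ is undefined, then $\tau(x)<\tau(z)$ for all $z\in(x,n]$, so (b) holds for every $y$. The right-neighbors are then exactly the right-records of $x$.

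If $r(x)$ exists, (b) is equivalent to $(x,y)\cap\{z:\tau(z)<\tau(x)\}=\emptyset$, since $\tau$ is injective and $\tau(z)\neq\tau(x)$ for $z\neq x$. By the definition of $r(x)$ as the least $z>x$ with $\tau(z)<\tau(x)$, this emptiness holds iff $r(x)\notin(x,y)$, i.e.\ iff $y\le r(x)$. Thus the right-neighbors are exactly the right-records of $x$ lying in $[x+1,r(x)]$. This gives the first sentence of the lemma.

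\emph{Largest right-neighbor.} In the case where $r(x)$ exists, we check that $r(x)$ is itself a right-record. Every $z\in(x,r(x))$ satisfies $\tau(z)>\tau(x)>\tau(r(x))$, so $\tau(r(x))<\min_{z\in(x,r(x))}\tau(z)$. Hence $r(x)$ belongs to the allowed range and is its maximum, so it is the largest right-neighbor.

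In the case where $r(x)$ is undefined, the right-neighbors are all right-records. By Lemma~\ref{lem:rightmost-ltr-is-argmin}, the rightmost of these is $\arg\min_{y>x}\tau(y)$, whose hypothesis is exactly this case.

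The only delicate point is the equivalence of (b) with $y\le r(x)$, in particular the boundary $y=r(x)$. There $(x,y)$ excludes $r(x)$, and we must use the minimality of $r(x)$ to know that no earlier-inserted vertex sits strictly inside $(x,r(x))$. Everything else follows directly from the edge criterion and the previous lemma.
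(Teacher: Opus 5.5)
Your proof is correct and takes essentially the same route as the paper. Both unpack Lemma~\ref{lem:edge-criterion}, use the minimality of $r(x)$ to cut the range off at $y \le r(x)$, check that $r(x)$ is itself a right-record, and invoke Lemma~\ref{lem:rightmost-ltr-is-argmin} when $r(x)$ is undefined. The only difference is cosmetic: you split $\max\{\tau(x),\tau(y)\}<\min_{z\in(x,y)}\tau(z)$ into the two inequalities (a) and (b), whereas the paper case-splits on whether $\tau(y)<\tau(x)$; your version gives the ``right-records in $[x+1,r(x)]$'' description and both directions of the equivalence a little more directly.
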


\begin{proof}
By Lemma~\ref{lem:edge-criterion}, $y > x$ is a neighbor iff $\max\{\tau(x),\tau(y)\} < \min_{z \in (x,y)} \tau(z)$. If $\tau(y) < \tau(x)$, this requires $\tau(x) < \min_{z \in (x,y)} \tau(z)$, which holds iff $y \leq r(x)$ (if $r(x)$ exists). If $\tau(y) > \tau(x)$, it requires $\tau(y) < \min_{z \in (x,y)} \tau(z)$, meaning $y$ is a right-record of $x$. However, for $y > r(x)$ (when $r(x)$ exists), we have $\tau(r(x)) < \tau(x) < \tau(y)$ and $r(x) \in (x,y)$, violating the condition. Thus only $y \leq r(x)$ can be neighbors when $r(x)$ exists. Among these, $r(x)$ itself satisfies the edge condition (as $\tau(r(x)) < \tau(x)$ and no $z \in (x,r(x))$ has $\tau(z) < \tau(x)$ by definition of $r(x)$), and is the rightmost neighbor. When $r(x)$ is undefined, $\tau(x)$ is minimal in $[x,n]$, so neighbors are exactly the right-records of $x$; the rightmost such neighbor is $\arg\min_{y > x} \tau(y)$ by Lemma~\ref{lem:rightmost-ltr-is-argmin}.
\end{proof}

\subsection{Greedy path characterization}

\begin{theorem}[Greedy path = record sequence]
\label{thm:greedy-path}
The greedy walk from $1$ to $n$ traverses precisely:
\begin{enumerate}[label=(\alph*)]
\item All \ltr minima in order from $1$ to $m$ (the global minimum position),
\item Followed by all \rtl minima strictly greater than $m$ in order to $n$.
\end{enumerate}
Consequently, the number of steps is
\[
S_n = L_n + R_n - 2,
\]
where $L_n = L_n(\tau)$ and $R_n = R_n(\tau)$.
\end{theorem}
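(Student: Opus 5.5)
We plan to use Lemma~\ref{lem:right-neighbors} as the single transition rule. It says the greedy walk moves from $x<n$ to $r(x)$ when $r(x)$ exists, and to $\arg\min_{y>x}\tau(y)$ otherwise. The proof then splits into the two phases of the statement, each handled by induction on the step count.

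\emph{Phase (a).} We show by induction that every vertex visited before $m$ is an \ltr minimum, and that the walk visits the \ltr minima consecutively. The base case is that $1$ is an \ltr minimum. Let $x$ be an \ltr minimum with $x\neq m$. Then $\tau(m)<\tau(x)$ with $m>x$, because $x$ is a record and $m$ cannot lie to its left. Hence $r(x)$ exists, and the walk moves to $r(x)$.

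Next we check that $r(x)$ is the next \ltr minimum after $x$. Every $z\in(x,r(x))$ has $\tau(z)>\tau(x)$, so none of these is a record. On the other hand, $\tau(r(x))<\tau(x)=\min_{w\le x}\tau(w)$, so $r(x)$ is a record. Since the \ltr minima have strictly decreasing $\tau$ values and the last one is $m$, this phase ends exactly at $m$ after $L_n-1$ steps.

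\emph{Phase (b).} If $m=n$ the walk is finished; this agrees with the formula because $R_n=1$ in that case. Otherwise, at $m$ the value $r(m)$ is undefined, so the walk moves to $y_1=\arg\min_{y>m}\tau(y)$. This vertex is an \rtl minimum, since it is the minimum of $[y_1,n]$. It is also the leftmost \rtl minimum greater than $m$: any $z\in(m,y_1)$ has $\tau(z)>\tau(y_1)$, so $z$ is not an \rtl minimum.

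Inductively, at an \rtl minimum $y<n$ we have $\tau(y)=\min_{[y,n]}\tau$, so $r(y)$ is again undefined. The walk therefore moves to $\arg\min_{z>y}\tau(z)$. By the same argument this is the next \rtl minimum to the right of $y$. The walk continues in this way until it reaches $n$, which is always an \rtl minimum.

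\emph{Counting and the main obstacle.} The \rtl minima that are $\ge m$ are exactly the \rtl minima of the whole permutation. To see this, note that $m$ is itself one, and no position $z<m$ can be one because $\tau(m)<\tau(z)$. The same reasoning applied from the left shows that every \ltr minimum is $\le m$. Phase (b) therefore takes $R_n-1$ steps, one for each \rtl minimum other than $m$, and the total is $S_n=(L_n-1)+(R_n-1)=L_n+R_n-2$.

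The main obstacle is the bookkeeping at the junction $m$. It must be counted once, as the endpoint of phase (a) and the start of phase (b), and the degenerate cases $m=1$ and $m=n$ must be checked against the formula. In the case $m=1$, phase (a) has zero steps and $L_n=1$. The phase-(b) rule also needs care: it requires $r(y)$ to be undefined at each \rtl minimum, and this must be verified explicitly from the definition rather than assumed.
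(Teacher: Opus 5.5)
Your proposal is correct and follows the paper's proof essentially step for step: Lemma~\ref{lem:right-neighbors} as the transition rule, induction through the left-to-right minima up to $m$, then induction through the right-to-left minima beyond $m$, and the count $(L_n-1)+(R_n-1)$. Your extra checks fill in details the paper only asserts: that $r(x)$ is the \emph{next} left-to-right minimum, that every left-to-right minimum is $\le m$ and every right-to-left minimum is $\ge m$, and the degenerate cases $m=1$ and $m=n$.
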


\begin{proof}
Let $x_0 = 1, x_1, x_2, \dots, x_k = n$ be the greedy path. Since $x_0 = 1$ is an \ltr minimum, we proceed by induction.

\textbf{Phase 1 (until $m$):} Suppose $x_i$ is an \ltr minimum with $x_i < m$. Then $\tau(x_i) > \tau(m)$ (as $m$ has the global minimum), so $r(x_i)$ exists and $r(x_i) \leq m$. By Lemma~\ref{lem:right-neighbors}, $x_{i+1} = r(x_i)$. But $r(x_i)$ is precisely the next position $y > x_i$ with $\tau(y) < \tau(x_i)$, which by definition is the next \ltr minimum. This sequence continues until reaching $m$, the rightmost \ltr minimum (since $\tau(m) = 1$ is minimal).

\textbf{Phase 2 (after $m$):} At $x_i = m$, no $y > m$ satisfies $\tau(y) < \tau(m)$, so $r(m)$ is undefined. By Lemma~\ref{lem:right-neighbors}, $x_{i+1} = \arg\min_{y > m} \tau(y)$. This vertex is an \rtl minimum (as it has minimal $\tau$ in $[x_{i+1}, n]$). Inductively, if $x_j > m$ is an \rtl minimum, then $\tau(x_j) < \min_{z > x_j} \tau(z)$ implies no vertex in $(x_j, n]$ has smaller insertion time, so the largest neighbor is $\arg\min_{y > x_j} \tau(y)$, which is the next \rtl minimum. This continues until $n$, which is always an \rtl minimum.

\textbf{Step count:} Phase 1 visits all $L_n$ \ltr minima, requiring $L_n - 1$ steps. Phase 2 visits all \rtl minima strictly greater than $m$; since $m$ is itself an \rtl minimum, there are $R_n - 1$ such vertices, requiring $R_n - 1$ steps. Total steps: $(L_n - 1) + (R_n - 1) = L_n + R_n - 2$.
\end{proof}

\begin{remark}
The formula $S_n = L_n + R_n - 2$ is \emph{exact} for every permutation $\tau$, not just in expectation. Figure~\ref{fig:graph16} illustrates the construction and greedy path for a small instance.
\end{remark}

The same combinatorial structure governs greedy walks between arbitrary vertices.

\begin{corollary}[Arbitrary start and target]
\label{cor:arbitrary-st}
For any $1 \leq s < t \leq n$, let $S_{s,t}$ denote the number of steps in the greedy walk from $s$ to $t$ in $G_n$. Then
\[
S_{s,t} = L(s,t) + R(s,t) - 2,
\]
where $L(s,t)$ and $R(s,t)$ are the numbers of \ltr and \rtl minima of $\tau$ restricted to $[s,t]$.
\end{corollary}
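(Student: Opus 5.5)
The plan is to reduce the general case to Theorem~\ref{thm:greedy-path}, applied to the restriction of $\tau$ to $[s,t]$. Greedy steps only ever look at neighbors of the current vertex, so the main task is to show that the walk from $s$ toward $t$ never leaves $[s,t]$ and only uses edges whose existence is decided by $\tau$ restricted to $[s,t]$.

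\textbf{Step 1: the walk stays in $[s,t]$ and moves rightward.} With target $t$, the walk at $x<t$ chooses the neighbor minimizing $|y-t|$. The edge $\{x,x+1\}$ always exists by Corollary~\ref{cor:adjacent-edges}, so some neighbor lies at distance at most $t-x-1$ from $t$. Any neighbor $y>t$ with $y-t<t-x$ would therefore compete, and this is the one genuinely new point.

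To rule it out, I will show that any neighbor $y>t$ of $x$ is beaten or tied by a neighbor in $(x,t]$. Take such a neighbor $y>t$. By Lemma~\ref{lem:edge-criterion}, every $z\in(x,y)$, and in particular every $z\in(x,t]$, has $\tau(z)>\max\{\tau(x),\tau(y)\}$. So $\tau(x)<\min_{z\in(x,t]}\tau(z)$, and $r(x)$ restricted to $[x,t]$ is undefined. Let $y^*=\arg\min_{z\in(x,t]}\tau(z)$. Then $y^*$ is a neighbor of $x$ by Lemma~\ref{lem:edge-criterion}: its interval $(x,y^*)$ has larger $\tau$-values than both endpoints.

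If $y^*=t$, the walk jumps to $t$ directly, which is optimal, so take $y^*<t$. In that case $y^*\in(x,y)$ with $\tau(y^*)$ minimal on $(x,t]$, which does not yet contradict the edge $\{x,y\}$. The resolution is to show that this configuration does not change the outcome. I would first check whether the paper's convention forbids overshooting. If it does not, I would argue by contradiction: suppose the greedy choice at some step is $y>t$ with $y-t<t-y^*$. The walk then continues from $y>t$ and must come back leftward.

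This overshoot case is the main obstacle. I expect a counterexample may exist, for instance $s=1$, $t=3$, $n=4$ with $\tau(1)=1$, $\tau(4)=2$, $\tau(2)=3$, $\tau(3)=4$. Here $1$ is adjacent to $4$ (since $\tau(2),\tau(3)>2$), and $|4-3|=1=|2-3|$, a tie. My plan is therefore to adopt the convention, implicit in the paper's definition $\GW(x)=\arg\max_{y\in N_G(x)\cap(x,n]}y$, that the walk toward $t$ only considers neighbors in $(x,t]$, i.e.\ it never overshoots. I will state this assumption explicitly, with ties broken toward $[s,t]$.

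\textbf{Step 2: transfer the lemmas.} Under this convention the walk is determined by $N_G(x)\cap(x,t]$. By Lemma~\ref{lem:edge-criterion}, membership of an edge $\{x,y\}$ with $x<y\le t$ depends only on $\tau|_{[x,y]}\subseteq\tau|_{[s,t]}$. Hence Lemmas~\ref{lem:rightmost-ltr-is-argmin} and~\ref{lem:right-neighbors} hold verbatim with $n$ replaced by $t$ and all suffixes taken inside $[x,t]$.

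\textbf{Step 3: rerun the proof of Theorem~\ref{thm:greedy-path}.} Let $m'=\arg\min_{z\in[s,t]}\tau(z)$. Phase 1 visits the \ltr minima of $\tau|_{[s,t]}$ from $s$ to $m'$. Phase 2 visits the \rtl minima of $\tau|_{[s,t]}$ after $m'$ up to $t$. Since $m'$ is counted in both lists, the step count is $L(s,t)+R(s,t)-2$. Relabeling $[s,t]$ as $\{1,\dots,t-s+1\}$ makes this literally an instance of Theorem~\ref{thm:greedy-path}, using only relative orders, which are all that matter.
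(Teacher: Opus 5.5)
Your Steps 2 and 3 are exactly the paper's proof. By Lemma~\ref{lem:edge-criterion}, an edge $\{x,y\}$ with $s\le x<y\le t$ is decided by $\tau|_{[x,y]}$, so after relabeling $[s,t]$ one is looking at an instance of Theorem~\ref{thm:greedy-path}.

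The difference is Step 1, and there you are right and the paper is not. Its proof simply asserts that the walk toward $t$ ``moves rightward at each step, visiting only vertices in $[s,t]$.'' This is false for the walk that moves to the neighbor minimizing $|y-t|$. Your $n=4$ example only produces a tie, and it does not change the count: breaking the tie toward $4$ gives $1\to4\to3$, two steps, matching $1+3-2$. A strict counterexample to the formula exists already at $n=5$:
\begin{itemize}
\item Take $(\tau(1),\dots,\tau(5))=(1,3,4,5,2)$, $s=1$, $t=4$.
\item The edges are $\{1,2\},\{2,3\},\{3,4\},\{4,5\},\{1,5\},\{2,5\},\{3,5\}$, so $N_G(1)=\{2,5\}$.
\item Since $|5-4|<|2-4|$, the walk is $1\to5\to4$, i.e.\ $S_{1,4}=2$.
\item But $\tau|_{[1,4]}=(1,3,4,5)$ has $L(1,4)=1$ and $R(1,4)=4$, so the formula gives $3$.
\end{itemize}
In general, suppose $\tau(x)<\min_{(x,t]}\tau$. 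Then the rightmost neighbor of $x$ in $(x,t]$ is $y^*=\arg\min_{(x,t]}\tau$. The first $z>t$ with $\tau(z)<\tau(y^*)$, if it exists, is the nearest neighbor of $x$ beyond $t$. The walk overshoots exactly when $z-t<t-y^*$. If instead $\tau(x)>\min_{(x,t]}\tau$, then $r(x)\le t$ blocks every edge from $x$ past $t$.

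So no tie-breaking convention saves the corollary as stated. Restricting candidates to $N_G(x)\cap(x,t]$ is a genuine change of the walk; equivalently, it means routing greedily in the induced subgraph on $\{s,\dots,t\}$. It is not implicit in the paper's definition, where overshooting is impossible only because nothing lies beyond $n$.

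With that restriction written into the statement as a hypothesis, your argument is a complete proof. Replace the inconclusive discussion in Step 1 by the counterexample above, since that is what justifies the restriction. The same caveat carries over to Corollaries~\ref{cor:prob-arbitrary-st} and~\ref{cor:random-st}, which inherit the formula.
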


\begin{proof}
The greedy walk from $s$ targeting $t$ moves rightward at each step, visiting only vertices in $[s,t]$. By Lemma~\ref{lem:edge-criterion}, the edge $\{x,y\}$ with $s \leq x < y \leq t$ depends only on the relative ordering of insertion times in $[x,y]$. Since the graph structure and greedy dynamics within $[s,t]$ depend only on $\tau|_{[s,t]}$, Theorem~\ref{thm:greedy-path} applies with $[s,t]$ in place of $[1,n]$.
\end{proof}

\section{Probabilistic Analysis}
\label{sec:prob-analysis}

Since $\tau$ is a uniform random permutation, $L_n$ and $R_n$ are identically distributed (though not independent). We analyze $S_n = L_n + R_n - 2$.

\begin{theorem}[Expectation and variance]
\label{thm:expectation}
\[
\E[S_n] = 2H_n - 2 = 2\log n + 2\gamma - 2 + o(1),
\]
\[
\Var(S_n) = 2\left(H_n - \sum_{i=1}^n \frac{1}{i^2}\right) + 2\Cov(L_n, R_n) = 2\log n + O(1),
\]
where $H_n = \sum_{i=1}^n 1/i$ and $\gamma \approx 0.57721$ is Euler's constant.
\end{theorem}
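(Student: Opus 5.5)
The plan is to work directly from the exact identity $S_n = L_n + R_n - 2$ of Theorem~\ref{thm:greedy-path}, together with the indicator representation \eqref{eq:record-indicators}. Write $L_n=\sum_{i=1}^n I_i$, where $I_i$ indicates that position $i$ is an \ltr{} minimum of $\tau$, and $R_n=\sum_{j=1}^n J_j$, where $J_j$ indicates that position $j$ is an \rtl{} minimum. Then $\prob(I_i=1)=1/i$ and $\prob(J_j=1)=1/(n-j+1)$. Linearity gives $\E[L_n]=\E[R_n]=H_n$, hence $\E[S_n]=2H_n-2$, and the asymptotic form follows from $H_n=\log n+\gamma+o(1)$. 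For the variance, expand
\[
\Var(S_n)=\Var(L_n)+\Var(R_n)+2\Cov(L_n,R_n).
\]
The independence of the $I_i$ (and, by symmetry, of the $J_j$) gives $\Var(L_n)=\Var(R_n)=H_n-\sum_i 1/i^2$. This is the displayed formula, so all that remains is to show $\Cov(L_n,R_n)=O(1)$.

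To compute the covariance, I would write $\Cov(L_n,R_n)=\sum_{i,j}\bigl(\prob(I_i=J_j=1)-\frac{1}{i(n-j+1)}\bigr)$ and split into three cases.
\begin{enumerate}
\item For $i<j$, the event $\{I_i=1\}$ depends only on the relative order of $\tau$ on $[1,i]$, and $\{J_j=1\}$ only on the relative order on $[j,n]$. These sets are disjoint, and relative orders of a uniform permutation on disjoint sets are independent, so the covariance term vanishes.
\item For $i=j$, both events hold iff $i=m$ is the global minimum, which has probability $1/n$. Summed over $i$, these joint probabilities contribute exactly $1$.
\item For $j<i$, $I_i=1$ forces $\tau(i)<\tau(j)$, while $J_j=1$ forces $\tau(j)<\tau(i)$. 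The joint probability is therefore $0$.
\end{enumerate}
Combining the cases,
\[
\Cov(L_n,R_n)=1-\sum_{1\le j\le i\le n}\frac{1}{i\,(n-j+1)}.
\]

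The main obstacle is showing this double sum is $O(1)$ rather than the naive bound $H_n^2$. Substitute $a=i$ and $b=n-j+1$, so the sum runs over $a,b\in[1,n]$ with $a+b\ge n+1$. Use the identity $\frac1{ab}=\frac{1}{a+b}\bigl(\frac1a+\frac1b\bigr)$ and symmetry in $a$ and $b$. The sum becomes $2\sum_a \frac1a\sum_{b\ge n+1-a}\frac{1}{a+b}$. The inner sum is $\log\frac{n+a}{n+1}+O(1/n)$, and $\log\frac{n+a}{n+1}\le a/n$, so the whole sum is bounded by a constant. In fact it is a Riemann sum converging to $2\int_0^1\frac{\ln(1+x)}{x}\,dx=\pi^2/6$. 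Hence $\Cov(L_n,R_n)=1-\pi^2/6+o(1)=O(1)$. Together with $H_n-\sum_i 1/i^2=\log n+O(1)$, this gives $\Var(S_n)=2\log n+O(1)$.
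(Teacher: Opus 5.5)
Your proof is correct. For $\E[S_n]$ and for the decomposition $\Var(S_n)=\Var(L_n)+\Var(R_n)+2\Cov(L_n,R_n)$ it coincides with the paper's proof.

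The two differ at the one nontrivial step. The paper never proves $\Cov(L_n,R_n)=O(1)$; it cites it as ``a known result'' and adds that the covariance is strictly positive. Your three-case split proves it and yields the exact identity $\Cov(L_n,R_n)=1-\sum_{1\le j\le i\le n}\frac{1}{i(n-j+1)}$:
\begin{itemize}
\item for $i<j$, the relative orders on the disjoint blocks $[1,i]$ and $[j,n]$ are independent;
\item for $i=j$, both indicators equal $1$ exactly at the global minimum;
\item for $j<i$, the two events require both $\tau(i)<\tau(j)$ and $\tau(j)<\tau(i)$, so they are incompatible.
\end{itemize}
The inner sum equals $H_{n+a}-H_n\le a/(n+1)$, so the double sum is at most $2$. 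Your argument is therefore self-contained where the paper's is not.

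Your computation also shows that the covariance is \emph{negative}, tending to $1-\pi^2/6\approx-0.645$. Direct enumeration confirms this: $\Cov(L_2,R_2)=-1/4$ and $\Cov(L_3,R_3)=-13/36$. The paper's positivity assertion, and the positive-correlation explanation in Remark~\ref{rem:conservatism}, are therefore wrong. The theorem as stated is unaffected, because only boundedness of the covariance is used.

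A shorter route to both the sign and the bound is to condition on the position $m$ of the global minimum. Given $m$, $L_n-1$ and $R_n-1$ are independent record counts of uniform permutations of sizes $m-1$ and $n-m$. Hence $\Cov(L_n,R_n)=\Cov(H_{m-1},H_{n-m})$ with $m$ uniform on $\{1,\dots,n\}$. This is nonpositive, since one function is increasing in $m$ and the other decreasing. It is $O(1)$ by Cauchy--Schwarz, because $\E[(H_{m-1}-\log n)^2]=O(1)$.
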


\begin{proof}
By linearity and symmetry, $\E[S_n] = \E[L_n] + \E[R_n] - 2 = 2H_n - 2$. For variance:
\[
\Var(S_n) = \Var(L_n) + \Var(R_n) + 2\Cov(L_n, R_n).
\]
We have $\Var(L_n) = \sum_{i=1}^n \frac{1}{i}\left(1 - \frac{1}{i}\right) = H_n - \sum_{i=1}^n \frac{1}{i^2}$. Similarly for $\Var(R_n)$. The covariance satisfies $\Cov(L_n, R_n) = O(1)$ and is strictly positive (a known result for record statistics in random permutations), so:
\[
\Var(S_n) = 2\left(H_n - \sum_{i=1}^n \frac{1}{i^2}\right) + 2\Cov(L_n, R_n) = 2\log n + O(1).
\]
\end{proof}

\begin{remark}[Typical-case behavior and Gaussian approximation]
\label{rem:gaussian-approx}
While Theorem~\ref{thm:concentration-sharp} provides non-asymptotic tail bounds, the exact identity $S_n = L_n + R_n - 2$ combined with the classical Central Limit Theorem for record statistics~\cite{Gnedin08} implies a stronger distributional limit:
\begin{equation}
\label{eq:clt-sn}
\frac{S_n - (2H_n - 2)}{\sqrt{2\log n}} \xrightarrow{d} \mathcal{N}(0,1) \quad \text{as } n \to \infty.
\end{equation}
Figure~\ref{fig:sn-distribution} illustrates this convergence: for increasing $n$, the empirical distribution of $S_n$ becomes increasingly Gaussian, centered at $2\log n$ with standard deviation $\sqrt{2\log n}$. This explains why the Bennett-type bounds of Theorem~\ref{thm:concentration-sharp} are qualitatively sharp---the rate function $h(u)$ matches the Gaussian tail $\exp(-u^2/2)$ for small $u$, while providing valid (though conservative) bounds for large deviations where the union bound dominates.
\end{remark}

\begin{figure}[t]
\centering
\includegraphics[width=\textwidth]{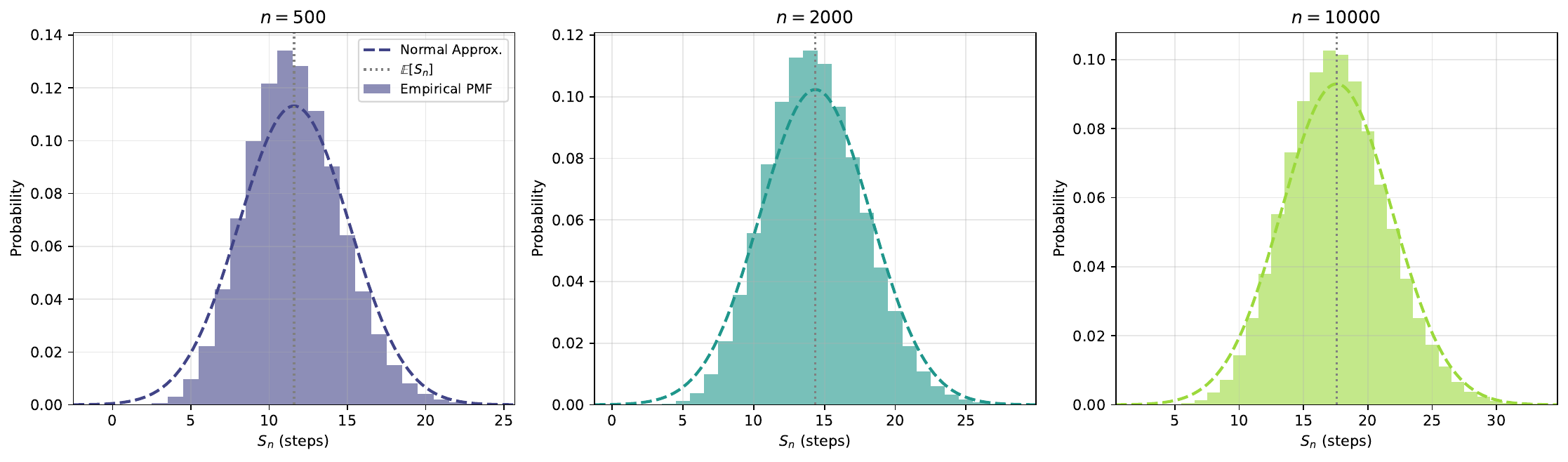}
\caption{Empirical probability mass function of $S_n$ for $n \in \{500, 2000, 10000\}$ (blue bars) compared to the asymptotic normal approximation $\mathcal{N}(2H_n-2,\, 2\log n)$ (dashed red). The dotted line marks $\E[S_n]$.}
\label{fig:sn-distribution}
\end{figure}

\begin{theorem}[Sharp concentration (explicit bounds)]
\label{thm:concentration-sharp}
Let $h(u) = (1+u)\ln(1+u) - u$ for $u > -1$, and $h(u)=\infty$ otherwise.
For any $n\ge 2$ and any $t>0$,
\begin{align*}
\prob\!\bigl(S_n \ge 2H_n + t - 2\bigr) &\le 2\exp\!\Bigl( - \sigma_n^2 \, h\!\Bigl(\frac{t}{2\sigma_n^2}\Bigr) \Bigr), \\[4pt]
\prob\!\bigl(S_n \le 2H_n - t - 2\bigr) &\le 2\exp\!\Bigl( - \sigma_n^2 \, h\!\Bigl(-\frac{t}{2\sigma_n^2}\Bigr) \Bigr),
\end{align*}
where $\sigma_n^2 = \Var(L_n) = H_n - \sum_{i=1}^n \frac{1}{i^2}$.
In asymptotic form, for any constant $c>0$,
\[
\prob\bigl(S_n \ge (2+c)\log n\bigr) \le n^{-h(c/2) + o(1)},
\]
and for $0<c<2$,
\[
\prob\bigl(S_n \le (2-c)\log n\bigr) \le n^{-h(-c/2) + o(1)}.
\]
\end{theorem}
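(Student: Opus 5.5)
The natural route is a union bound that splits the deviation of $S_n$ between $L_n$ and $R_n$; this avoids handling their dependence. Since $S_n - (2H_n-2) = (L_n - H_n) + (R_n - H_n)$, the event $\{S_n \ge 2H_n + t - 2\}$ is contained in $\{L_n - H_n \ge t/2\} \cup \{R_n - H_n \ge t/2\}$. By the symmetry $\tau \mapsto \tau \circ \rho$, where $\rho(i) = n+1-i$ reverses positions, $R_n$ has the same law as $L_n$. So it suffices to show
\[
\prob(L_n - H_n \ge s) \le \exp\bigl(-\sigma_n^2 h(s/\sigma_n^2)\bigr)
\]
with $s = t/2$. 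This accounts for the factor $2$ and the argument $t/(2\sigma_n^2)$. The lower tail is handled identically with $\{L_n - H_n \le -t/2\}$.

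For the one-variable bound I would use the representation \eqref{eq:record-indicators}, $L_n = \sum_i I_i$ with independent $I_i \sim \mathrm{Bernoulli}(1/i)$. The centered summands $X_i = I_i - 1/i$ satisfy $X_i \le 1$, and their variances sum to $\sigma_n^2$. This is exactly the setting of Bennett's inequality. Concretely, for $\lambda>0$ one has $\E e^{\lambda X_i} \le \exp\bigl(\Var(X_i)(e^\lambda - 1 - \lambda)\bigr)$, using $e^{x} \le 1 + x + x^2(e^\lambda-1-\lambda)/\lambda^2$ for $x\le 1$. Multiplying over $i$, applying Markov's inequality, and optimizing at $\lambda = \ln(1+s/\sigma_n^2)$ gives $\exp(-\sigma_n^2 h(s/\sigma_n^2))$.

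For the lower tail I apply the same argument to $-X_i$ with $\lambda>0$. Here $-X_i \le 1/i \le 1$, so the same moment bound holds, and the optimization gives $\exp(-\sigma_n^2 h(s/\sigma_n^2))$. Since $h(-u)\ge h(u)$ for $0<u<1$, this dominates the stated $h(-s/\sigma_n^2)$ form. Alternatively, the $h(-u)$ form can be obtained directly from the Bernoulli Chernoff bound $\E e^{-\lambda I_i} \le \exp\bigl((e^{-\lambda}-1)/i\bigr)$, which gives the Poisson-type rate $H_n h(-s/H_n)$. Replacing $H_n$ by $\sigma_n^2$ here needs a small monotonicity check, namely that $v h(-s/v)$ is decreasing in $v$. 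When $s \ge \sigma_n^2$ the bound is trivial with $h=\infty$, since then $L_n \ge 1$ forces the event to be handled separately or the bound to be vacuous. I would check this boundary case carefully.

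For the asymptotic forms I set $2H_n + t - 2 = (2+c)\log n$, which gives $t = c\log n + O(1)$. With $\sigma_n^2 = \log n + O(1)$, the argument is $t/(2\sigma_n^2) = c/2 + O(1/\log n)$. By continuity of $h$, the exponent is $-(\log n) h(c/2) + O(1)$. The prefactor $2$ and the $O(1)$ are absorbed into $n^{o(1)}$. The lower tail is the same with $-c/2$, and the restriction $c<2$ keeps $-c/2 > -1$.

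The main obstacle is keeping the lower-tail rate in the stated $h(-\cdot)$ form with variance proxy $\sigma_n^2$ rather than $H_n$. I would settle it with the monotonicity-in-$v$ check above. Failing that, I would fall back on the symmetric Bennett bound $h(+\cdot)$, which is also valid, and note that it dominates.
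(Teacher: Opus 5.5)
Your upper-tail argument (union bound over $L_n$ and $R_n$, then Bennett with $s=t/2$) is the paper's argument and is fine. One minor slip: $e^{x}\le 1+x+x^2(e^\lambda-1-\lambda)/\lambda^2$ holds for $x\le\lambda$, applied with $x=\lambda X_i$, not for all $x\le 1$.

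The gap is the lower tail, where both of your routes rest on comparisons that point the wrong way.
\begin{itemize}
\item Because $h(-u)\ge h(u)$ on $(0,1)$, the Bennett bound for $-X_i$, namely $\exp(-\sigma_n^2 h(s/\sigma_n^2))$, is \emph{larger} than the target $\exp(-\sigma_n^2 h(-s/\sigma_n^2))$. It proves a weaker inequality and does not imply the stated one.
\item The map $v\mapsto v\,h(-s/v)$ is indeed decreasing (its derivative is $\ln(1-s/v)+s/v<0$). But since $\sigma_n^2<H_n$, this gives $\sigma_n^2 h(-s/\sigma_n^2)\ge H_n h(-s/H_n)$. So the Chernoff bound $\exp(-H_n h(-s/H_n))$ is again weaker than the target; transferring it would require the function to be \emph{increasing}.
\item Your remark that the case $s\ge\sigma_n^2$ is ``trivial'' is also backwards. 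There $h=\infty$ makes the right-hand side $0$, the strongest possible claim.
\end{itemize}

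In fact no argument can close this, because the finite-$n$ lower-tail inequality is false as stated. For $n\ge 7$ one has $\sum_{i\le n}i^{-2}>3/2$. Taking $t=2\sigma_n^2$ gives threshold $2H_n-t-2=2\sum_{i\le n}i^{-2}-2>1$ and right-hand side $0$, by the convention $h(-1)=\infty$; if you prefer $h(-1)=1$, increase $t$ slightly. Yet $\prob(S_n=1)=\prob\bigl(\{\tau(1),\tau(n)\}=\{1,2\}\bigr)=2/(n(n-1))>0$.

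The single-variable claim you reduce to fails for every $n\ge 2$. For $\sigma_n^2\le s\le H_n-1$ its bound is $0$, but $\prob(L_n=1)=1/n$.

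The paper's proof has the same defect, so you are not missing an idea the paper supplies. Its ``left-tail analogue'' $\prob(\sum X_i\le\mu-s)\le\exp(-v\,h(-s/v))$ for $[0,1]$-valued summands is false in general. A single Bernoulli$(p)$ with $p(1-p)\le s<p$ has left side $1-p$ and right side $0$.

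What is provable is the Poisson-type Chernoff form $\prob(S_n\le 2H_n-t-2)\le 2\exp\bigl(-H_n h(-t/(2H_n))\bigr)$, and that suffices for the asymptotic statement. With $t=c\log n+O(1)$, $0<c<2$, and $H_n=\log n+O(1)$, one gets $H_n h(-t/(2H_n))=h(-c/2)\log n+O(1)$, hence $n^{-h(-c/2)+o(1)}$. Derive the asymptotic lower tail that way rather than through the $\sigma_n^2$ form. Do not fall back on the $h(+\cdot)$ bound, which only gives the weaker exponent $h(c/2)$.
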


\begin{proof}
Recall that $L_n = \sum_{i=1}^n I_i$ with independent $I_i \sim \text{Bernoulli}(1/i)$.
Hence $L_n$ is a sum of independent $[0,1]$-bounded variables with mean $\mu_n = \E[L_n] = H_n$ 
and variance $\sigma_n^2 = \Var(L_n) = H_n - \sum_{i=1}^n i^{-2}$. By symmetry, $R_n$ has the same mean and variance.

\paragraph{Upper tail.}
Bennett's inequality~\cite{Bennett62} asserts that for independent $X_i \in [0,1]$ with $\sum \E[X_i] = \mu$ and $\sum \Var(X_i) = v$,
\[
\prob\!\Bigl(\sum X_i \ge \mu + s\Bigr) \le \exp\!\Bigl( -v\, h\!\Bigl(\frac{s}{v}\Bigr) \Bigr), \quad s>0,
\]
where $h(u) = (1+u)\ln(1+u)-u$. Since $S_n = L_n + R_n - 2$ and $\E[S_n] = 2\mu_n - 2$, the event $\{S_n \ge 2\mu_n + t - 2\}$ is contained in $\{L_n \ge \mu_n + t/2\} \cup \{R_n \ge \mu_n + t/2\}$. Applying Bennett's inequality with $s = t/2$ to each and taking the union bound:
\[
\prob(S_n \ge 2\mu_n + t - 2) \le 2\exp\!\Bigl( -\sigma_n^2\, h\!\Bigl(\frac{t}{2\sigma_n^2}\Bigr) \Bigr),
\]
which is the first finite-$n$ bound.

\paragraph{Lower tail.}
For sums of non-negative variables, the left-tail analogue holds: for $s \in (0,\mu)$,
\[
\prob\!\Bigl(\sum X_i \le \mu - s\Bigr) \le \exp\!\Bigl( -v\, h\!\Bigl(-\frac{s}{v}\Bigr) \Bigr).
\]
(This follows by the Cramér--Chernoff method, yielding the same rate function $h$ as the Legendre--Fenchel transform of the cumulant generating function.)
Similarly to the upper tail, the event $\{S_n \le 2\mu_n - t - 2\}$ is contained in $\{L_n \le \mu_n - t/2\} \cup \{R_n \le \mu_n - t/2\}$. The union bound gives
\[
\prob(S_n \le 2\mu_n - t - 2) \le 2\exp\!\Bigl( -\sigma_n^2\, h\!\Bigl(-\frac{t}{2\sigma_n^2}\Bigr) \Bigr).
\]

\paragraph{Asymptotic form.}
We have $H_n = \log n + \gamma + o(1)$ and $\sigma_n^2 = \log n + O(1)$.
For the upper tail, set $t_{\mathrm{tot}} = 2\mu_n + t_{\mathrm{tot}} - 2 = (2+c)\log n$, i.e.\ $t_{\mathrm{tot}} = c\log n + 2 - 2\gamma + o(1)$. Then
\[
\frac{t_{\mathrm{tot}}}{2\sigma_n^2} = \frac{c\log n + O(1)}{2(\log n + O(1))} \to \frac{c}{2}.
\]
By continuity of $h$,
\[
\prob\bigl(S_n \ge (2+c)\log n\bigr) \le 2\exp\!\Bigl( -(\log n + O(1))\, h\!\Bigl(\frac{c}{2}+o(1)\Bigr) \Bigr) = n^{-h(c/2) + o(1)},
\]
where the factor $2 = e^{\log 2}$ is absorbed into the $o(1)$ in the exponent.
For the lower tail, set $2\mu_n - t_{\mathrm{tot}} - 2 = (2-c)\log n$ with $0<c<2$, giving $t_{\mathrm{tot}} = c\log n + O(1)$. Then $t_{\mathrm{tot}}/(2\sigma_n^2) \to c/2 \in (0,1)$, so $-t_{\mathrm{tot}}/(2\sigma_n^2) \in (-1,0)$ for large $n$, ensuring $h(-t_{\mathrm{tot}}/(2\sigma_n^2))$ is well-defined. The identical calculation gives
\[
\prob\bigl(S_n \le (2-c)\log n\bigr) \le n^{-h(-c/2) + o(1)}.
\]
This completes the proof.
\end{proof}

\begin{remark}[Conservatism of the Union Bound and Numerical Verification]
\label{rem:conservatism}
Because $L_n$ and $R_n$ are positively correlated, $\Var(S_n) = 2\log n + O(1)$ exceeds the independent case, making large deviations of the sum more probable. Nevertheless, the union bound used in Theorem~\ref{thm:concentration-sharp} sums marginal tail probabilities without accounting for joint occurrences, so it remains a strict overestimate. The widening gap for larger $c$ is characteristic of large-deviation union bounds, where the marginal decay rate dominates the bound while the true joint tail probability decays more slowly.
\end{remark}

\begin{corollary}
\label{cor:whp}
For any $\varepsilon > 0$, $\prob\bigl((2-\varepsilon)\log n \leq S_n \leq (2+\varepsilon)\log n\bigr) \to 1$ as $n \to \infty$.
\end{corollary}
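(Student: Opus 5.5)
The plan is to apply the two asymptotic tail bounds of Theorem~\ref{thm:concentration-sharp} with $c=\varepsilon$ and add them. Fix $\varepsilon>0$; since the event only grows with $\varepsilon$, we may assume $0<\varepsilon<2$, so both tail statements apply. The complement of the event $\{(2-\varepsilon)\log n \le S_n \le (2+\varepsilon)\log n\}$ is contained in $\{S_n \ge (2+\varepsilon)\log n\}\cup\{S_n \le (2-\varepsilon)\log n\}$. This is harmless because the closed inequalities in the theorem's events only make those events larger. By the union bound,
\[
\prob\bigl(\text{complement}\bigr) \le n^{-h(\varepsilon/2)+o(1)} + n^{-h(-\varepsilon/2)+o(1)}.
\]

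The one point to check is that both exponents are strictly positive. The function $h(u)=(1+u)\ln(1+u)-u$ satisfies $h(0)=0$ and $h'(u)=\ln(1+u)$. The derivative is negative on $(-1,0)$ and positive on $(0,\infty)$, so $h(u)>0$ for every $u\in(-1,\infty)\setminus\{0\}$. Since $\varepsilon/2\in(0,1)$, both $h(\varepsilon/2)$ and $h(-\varepsilon/2)$ are positive constants.

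Hence for large $n$ the $o(1)$ terms in the exponents are below $\tfrac12\min\{h(\varepsilon/2),h(-\varepsilon/2)\}$. Each tail is then at most $n^{-\delta}$ for a fixed $\delta>0$, so it tends to $0$, and the probability of the event tends to $1$.

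There is no real obstacle beyond verifying the positivity of $h$ away from $0$ and making the reduction to $\varepsilon<2$ explicit.

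As a sanity check, Chebyshev's inequality with Theorem~\ref{thm:expectation} gives the same conclusion independently. Because $\E[S_n]=2\log n+O(1)$, deviating by $\varepsilon\log n$ from $2\log n$ requires deviating by at least $(\varepsilon/2)\log n$ from the mean for large $n$. Chebyshev then gives a probability of at most $\Var(S_n)/((\varepsilon/2)\log n)^2 = O(1/\log n)\to 0$.
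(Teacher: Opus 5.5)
Your argument is correct and matches the paper, which gives no separate proof and presents the corollary as the immediate consequence of the two asymptotic tail bounds of Theorem~\ref{thm:concentration-sharp} with $c=\varepsilon$; your reduction to $\varepsilon<2$ and your check that $h(\pm\varepsilon/2)>0$ via $h'(u)=\ln(1+u)$ supply the only details the paper leaves implicit. Your Chebyshev aside is also valid and is the more self-contained route, since it needs only $\E[S_n]=2\log n+O(1)$ and $\Var(S_n)=O(\log n)$, and the latter already follows from $\Var(S_n)\le 2\Var(L_n)+2\Var(R_n)$ without any information about $\Cov(L_n,R_n)$.
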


\begin{figure}[t]
    \centering
    \includegraphics[width=0.8\textwidth]{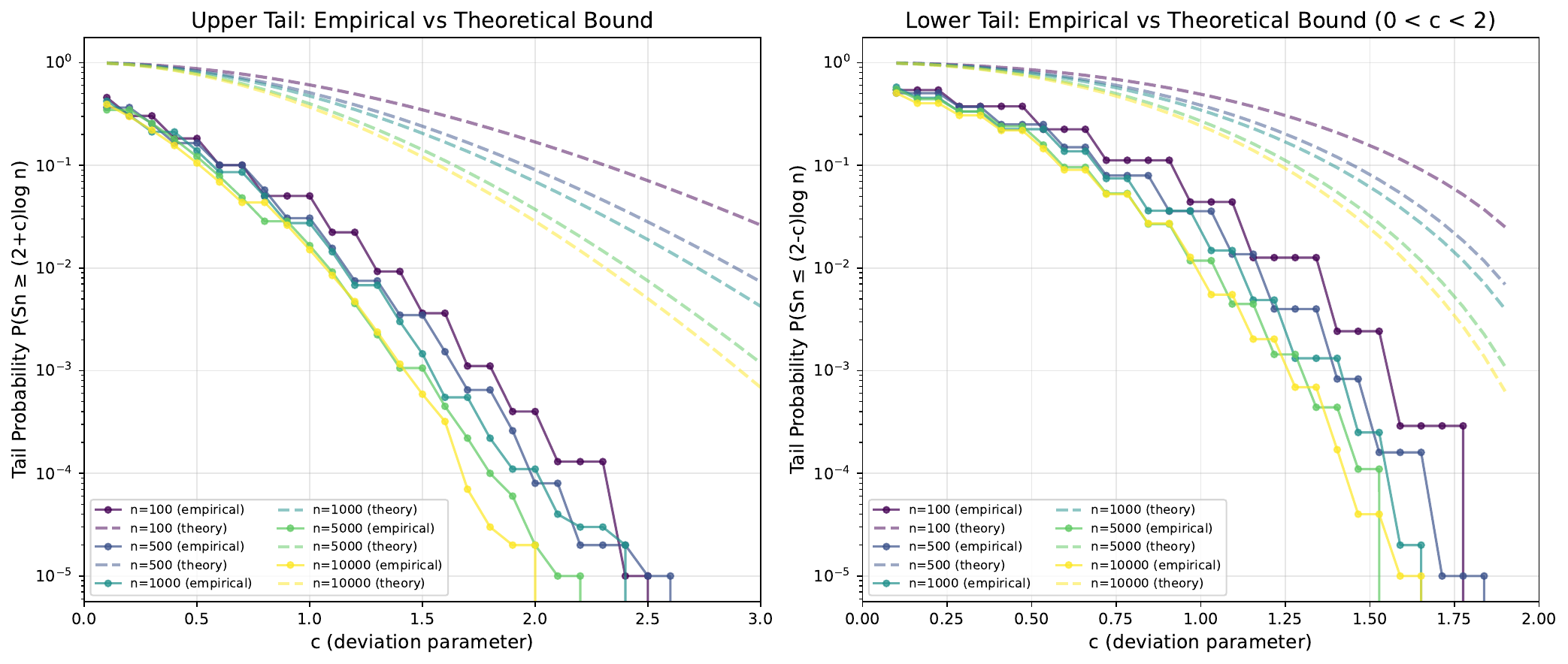}
    \caption{Empirical vs. theoretical tail bounds for $S_n$. \textbf{Left:} Upper tail $\prob(S_n \ge (2+c)\log n)$. \textbf{Right:} Lower tail $\prob(S_n \le (2-c)\log n)$. Dashed lines denote the Bennett-based bounds from Theorem~\ref{thm:concentration-sharp}; solid lines show empirical probabilities. The theoretical bounds are valid but conservative, as the union bound sums marginal tails without accounting for joint behavior or the positive correlation between $L_n$ and $R_n$.}
    \label{fig:tail_bounds}
\end{figure}

\subsection{Arbitrary start--target pairs}

Corollary~\ref{cor:arbitrary-st} generalizes the step count to arbitrary start--target pairs; its probabilistic consequences follow immediately.

\begin{corollary}[Expectation and concentration for arbitrary pairs]
\label{cor:prob-arbitrary-st}
For any $1 \leq s < t \leq n$,
\[
\E[S_{s,t}] = 2H_{t-s+1} - 2 \sim 2\log(t-s+1),
\]
and $S_{s,t} = \Theta(\log(t-s+1))$ with high probability.
\end{corollary}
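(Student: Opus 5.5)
The plan is to reduce everything to the case already handled, using Corollary~\ref{cor:arbitrary-st} and the exchangeability of a uniform random permutation. Set $k = t-s+1$. By Corollary~\ref{cor:arbitrary-st}, $S_{s,t} = L(s,t) + R(s,t) - 2$, where $L(s,t)$ and $R(s,t)$ count the \ltr and \rtl minima of $\tau$ restricted to $[s,t]$.

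First I will establish the distributional identity. The restriction $\tau|_{[s,t]}$, after relabeling its values by their relative order (the standardization map to $\{1,\dots,k\}$), is a uniform random permutation of $\{1,\dots,k\}$. This holds because every relative ordering of the $k$ values $\tau(s),\dots,\tau(t)$ is equally likely under a uniform $\tau$. Record statistics depend only on relative order, so the pair $(L(s,t),R(s,t))$ has the same joint law as $(L_k,R_k)$. Consequently $S_{s,t} \stackrel{d}{=} S_k$.

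Next I will read off the expectation. Applying Theorem~\ref{thm:expectation} with $n$ replaced by $k$ gives $\E[S_{s,t}] = 2H_k - 2 \sim 2\log k$. Here the asymptotic equivalence is understood as $k\to\infty$.

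Then I will prove concentration. Apply Theorem~\ref{thm:concentration-sharp} with $n$ replaced by $k$, taking for instance $c=1$ in both tails. This gives
\[
\prob\bigl(\tfrac{1}{2}\log k \le S_{s,t} \le 3\log k\bigr) \ge 1 - k^{-h(1/2)+o(1)} - k^{-h(-1/2)+o(1)}.
\]
The right-hand side tends to $1$ as $k \to \infty$, so $S_{s,t} = \Theta(\log k)$ with high probability. More precisely, Corollary~\ref{cor:whp} transfers directly, giving $(2\pm\varepsilon)\log k$.

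The only delicate point is interpreting ``with high probability'' when $k$ stays bounded while $n$ grows. In that regime $S_{s,t}$ is deterministically between $0$ and $k-1$, so the $\Theta(\log k)$ claim is vacuous or trivial, and I will state that the w.h.p.\ assertion is meant as $t-s\to\infty$. A second point needs care: Corollary~\ref{cor:arbitrary-st} is stated for $s<t$. By the reflection $x\mapsto n+1-x$, which preserves the uniform law of $\tau$, the case $s>t$ reduces to the same statement. That symmetry step is the one I would verify explicitly; everything else is a direct substitution into results already proved.
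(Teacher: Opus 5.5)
Your argument follows the paper's proof step for step: quote Corollary~\ref{cor:arbitrary-st}, standardize $\tau|_{[s,t]}$ to a uniform permutation of $k=t-s+1$ elements, and substitute $k$ for $n$ in Theorems~\ref{thm:expectation} and~\ref{thm:concentration-sharp}. The problem is the input you take as given. The identity $S_{s,t}=L(s,t)+R(s,t)-2$ is false when $t<n$, so your proof and the paper's fail at the same step.

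The justification of Corollary~\ref{cor:arbitrary-st} (the walk only moves right and stays in $[s,t]$) is carried over from the case of target $n$, where no vertex lies beyond the target. For $t<n$, the rule $\arg\min_{y\in N_G(x)}|y-t|$ can use a long edge $x\to y$ with $y>t$. By Lemma~\ref{lem:edge-criterion}, such an edge exists whenever every vertex of $(x,y)$, including $t$, is inserted after both $x$ and $y$. The walk takes it whenever $y-t$ is smaller than the distance from $t$ to the rightmost neighbor of $x$ in $(x,t]$.

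Concretely, take $n=5$ and $\tau=(1,3,4,5,2)$. Then $N_G(1)=\{2,5\}$, so the walk from $1$ to $4$ is $1\to5\to4$, which is two steps. But $\tau|_{[1,4]}=(1,3,4,5)$ has $L(1,4)=1$ and $R(1,4)=4$, so the formula predicts three. A short case check shows the only other discrepant permutation is $(2,3,4,5,1)$; ties, where they occur, do not change the step count. Hence $\E[S_{1,4}]=\tfrac{13}{6}-\tfrac{1}{60}=\tfrac{43}{20}\neq 2H_4-2$. The exact expectation formula is false, not merely unproved.

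What your argument does establish:
\begin{itemize}
\item The case $t=n$, and via your reflection the case $t=1<s$. No vertex lies past the target, the walk is determined by $\tau|_{[s,n]}$, and the substitution $k=n-s+1$ is legitimate.
\item The walk restricted to the induced subgraph on $[s,t]$. By Lemma~\ref{lem:edge-criterion} that subgraph is exactly the graph built from $\tau|_{[s,t]}$, so Theorem~\ref{thm:greedy-path} applies to it.
\end{itemize}

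For the walk in $G_n$ with $t<n$ you need a different argument. Because $x\pm1$ are always neighbors, $|x_i-t|$ strictly decreases. So the walk stays in $[s,2t-s]$ and $S_{s,t}\le t-s$. However, it now depends on $\tau$ outside $[s,t]$, so neither $S_{s,t}\overset{d}{=}S_k$ nor the concentration bounds transfer. A $\Theta(\log k)$ w.h.p.\ statement would have to be proved by analysing the overshooting walk directly.

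A minor slip: with $c=1$ the lower threshold is $\log k$, not $\tfrac12\log k$. Your displayed event is still valid, just weaker.
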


\begin{proof}
Since $\tau|_{[s,t]}$ is a uniform random permutation of $t - s + 1$ elements (by exchangeability of uniform permutations), Theorems~\ref{thm:expectation} and~\ref{thm:concentration-sharp} apply to $S_{s,t} = L(s,t) + R(s,t) - 2$ (Corollary~\ref{cor:arbitrary-st}) with $n$ replaced by $t - s + 1$.
\end{proof}

\begin{corollary}[Uniformly random start and target]
\label{cor:random-st}
Let $s$ and $t$ be chosen uniformly at random from $\{1,\dots,n\}$ with $s \neq t$, independently of $\tau$. Then
\[
\E[S_{s,t}] = 2\log n + O(1).
\]
\end{corollary}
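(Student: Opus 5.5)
The plan is to condition on the unordered pair $\{s,t\}$ and reduce to Corollary~\ref{cor:prob-arbitrary-st}. First, the case $s>t$ must be handled. By the reflection symmetry $x\mapsto n+1-x$, which preserves the uniform law of $\tau$ and the edge criterion of Lemma~\ref{lem:edge-criterion}, the greedy walk from $s$ to $t$ with $s>t$ has the same step-count distribution as the walk from $n+1-s$ to $n+1-t$. In that walk the start is left of the target, so Corollary~\ref{cor:arbitrary-st} applies. Consequently, for every ordered pair with $|s-t|=d$ we get $\E[S_{s,t}\mid s,t]=2H_{d+1}-2$. Since $(s,t)$ is independent of $\tau$, the tower property gives
\[
\E[S_{s,t}] = \E\bigl[2H_{|s-t|+1}\bigr]-2 .
\]

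Next we bound $\E[H_{D+1}]$, where $D=|s-t|$. For the upper bound, $H_{D+1}\le H_n=\log n+O(1)$ holds deterministically. For the lower bound, use $H_{D+1}\ge \log(D+1)\ge \log D$. The law of $D$ is explicit: $\prob(D=d)=2(n-d)/(n(n-1))$ for $1\le d\le n-1$. So we need $\E[\log D]\ge \log n - O(1)$, that is, $\E[\log(n/D)]=O(1)$.

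To show this, write
\[
\E[\log(n/D)]=\sum_{d=1}^{n-1}\frac{2(n-d)}{n(n-1)}\log\frac nd \le \frac{2}{n-1}\sum_{d=1}^{n}\log\frac nd .
\]
By Stirling, $\sum_{d\le n}\log(n/d)=n\log n-\log n! = n+O(\log n)$. The right-hand side is therefore at most $2+o(1)$, a constant. Combining the two bounds gives $\E[S_{s,t}]=2\log n+O(1)$.

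The only delicate step is this lower bound. Short distances $D$ contribute terms $\log(n/D)$ that are large, so we must check that they are rare enough. The Stirling estimate settles this directly. An alternative is to use the tail bound $\prob(D\le n/k)\le 2/k$ and sum over dyadic scales.
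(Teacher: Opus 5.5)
Your proof is correct relative to the paper and has the same skeleton. You take $\E[S_{s,t}\mid D=d]=2H_{d+1}-2$ from Corollary~\ref{cor:prob-arbitrary-st} and average over the triangular law of $D=|s-t|$. The difference is in how you evaluate $\E[H_{D+1}]$.

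\begin{itemize}
\item The paper asserts $\sum_{d=1}^{n-1}(n-d)H_{d+1}=\frac{n^2}{2}\log n+O(n^2)$ and only sketches this via an integral comparison.
\item You sandwich instead. The bound $H_{D+1}\le H_n$ gives $\E[S_{s,t}]\le 2H_n-2$ deterministically. The estimate $\E[\log(n/D)]\le\frac{2}{n-1}(n\log n-\log n!)=2+o(1)$ gives the lower bound.
\end{itemize}

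Your route needs only monotonicity and Stirling, and every step is explicit. It also justifies the reduction to $s<t$ through the reflection $x\mapsto n+1-x$, rather than an unexplained ``by symmetry''. A careful direct evaluation in the paper's style would additionally identify the constant, $\E[H_{D+1}]=\log n+\gamma-\tfrac32+o(1)$ (since $\int_0^1 2(1-x)\log x\,dx=-\tfrac32$). Your crude sandwich does not give that constant.

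One caveat applies equally to your proof and the paper's. The input $2H_{d+1}-2$ rests on Corollary~\ref{cor:arbitrary-st}, whose proof claims that the greedy walk from $s$ to $t$ stays inside $[s,t]$. When $t<n$ this can fail: $x$ may have a neighbor beyond $t$ that is closer to $t$ than every neighbor in $(x,t]$.

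For instance, take $n=5$, $\tau=(1,3,4,5,2)$, $s=1$, $t=4$.
\begin{itemize}
\item Lemma~\ref{lem:edge-criterion} gives $N_G(1)=\{2,5\}$.
\item The walk is therefore $1\to5\to4$, which takes two steps.
\item But $L(1,4)+R(1,4)-2=1+4-2=3$.
\end{itemize}

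So, as written, both arguments establish the estimate only for the greedy walk confined to $[s,t]$. That walk coincides with the unrestricted one when the target is $1$ or $n$. For the unrestricted walk in $G_n$, the conditional expectation, and with it both of your bounds, would need a separate argument.
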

\begin{proof}
By symmetry, we condition on $s < t$. Let $D = t - s$ denote the distance between the endpoints. For a fixed $d \in \{1,\dots,n-1\}$, there are exactly $n-d$ ordered pairs $(s,t)$ with $s < t$ and $t-s = d$. Since there are $\binom{n}{2}$ such pairs in total, the conditional distribution of $D$ is triangular:
\[
\prob(D = d \mid s < t) = \frac{n-d}{\binom{n}{2}} = \frac{2(n-d)}{n(n-1)}.
\]
From Corollary~\ref{cor:prob-arbitrary-st}, we have $\E[S_{s,t} \mid D=d] = 2H_{d+1} - 2$. Taking the expectation over $D$ gives
\[
\E[S_{s,t}] = \sum_{d=1}^{n-1} \frac{2(n-d)}{n(n-1)} \bigl(2H_{d+1} - 2\bigr) = \frac{4}{n(n-1)} \sum_{d=1}^{n-1} (n-d) H_{d+1} - 2.
\]
To estimate the weighted sum, note that $H_{d+1} = \log d + \gamma + O(1/d)$. Using standard estimates for harmonic sums (or approximating by the integral $\int_1^n (n-x)\log x\,dx$), we obtain
\[
\sum_{d=1}^{n-1} (n-d) H_{d+1} = \frac{n^2}{2} \log n + O(n^2).
\]
Substituting this asymptotic into the expectation formula yields
\[
\E[S_{s,t}] = \frac{4}{n^2} \left( \frac{n^2}{2} \log n + O(n^2) \right) - 2 = 2\log n + O(1),
\]
which completes the proof.
\end{proof}

\begin{figure}[t]
\centering
\includegraphics[width=0.8\textwidth]{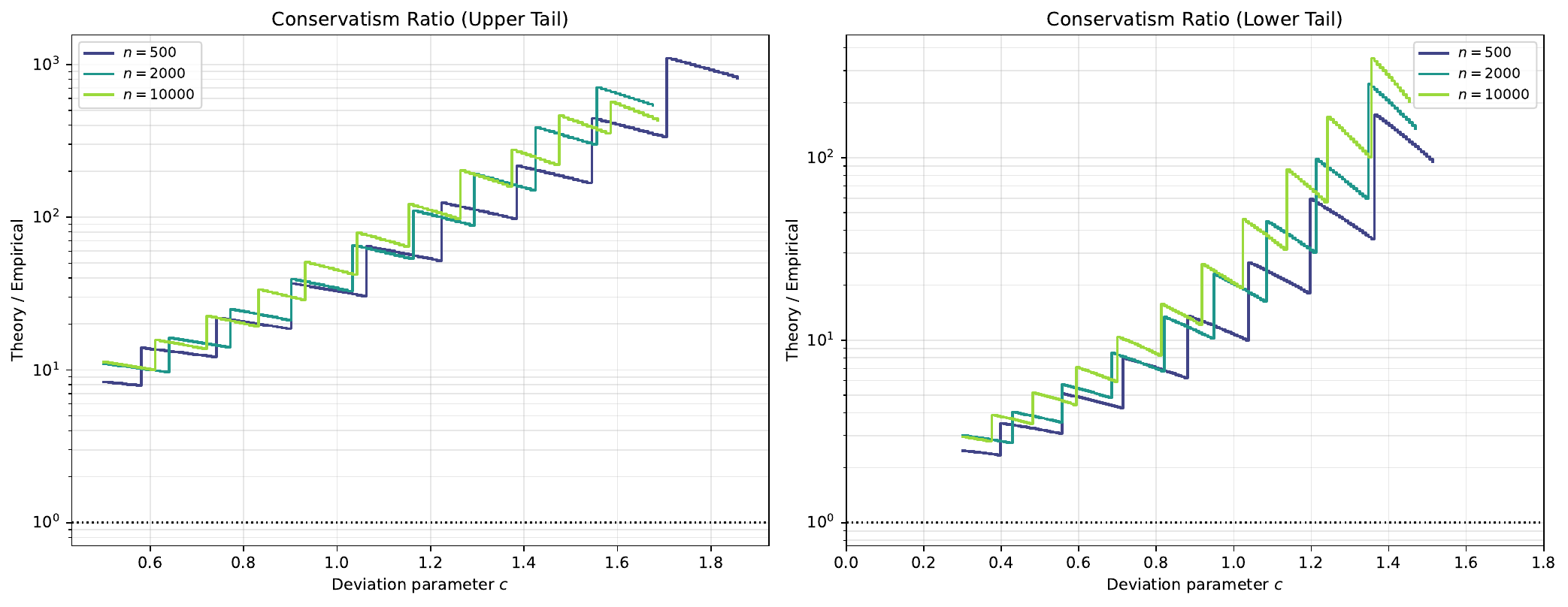}
\caption{Ratio of theoretical bound to empirical tail probability for the upper tail $P(S_n \ge (2+c)\log n)$. Values $>1$ indicate conservatism. The increasing trend with $c$ reflects the growing impact of the positive correlation between $L_n$ and $R_n$ in extreme tails. Curves terminate where empirical probability $< 10^{-6}$ (insufficient samples for reliable estimation).}
\label{fig:conservatism}
\end{figure}

\section{Conclusion and Open Problems}
\label{sec:conclusion}
We have established that greedy routing from vertex $1$ to $n$ in the sequentially grown 1D random graph requires exactly $S_n = L_n + R_n - 2$ steps, where $L_n$ and $R_n$ count the left-to-right and right-to-left minima in the insertion-time permutation. This exact combinatorial identity resolves the long-standing logarithmic routing conjecture that originated from empirical studies~\cite{Ponomarenko2011, malkov2016overlay}, demonstrating that routing complexity is tightly concentrated around $2\log n$. More fundamentally, it reveals how navigability emerges naturally from sequential geometric attachment, bypassing the need for engineered long-range edges or explicit degree-biased rules.

This analysis provides a foundational stepping stone toward understanding sequential growth in continuous and multidimensional spaces. We identify three primary directions for future research:
\begin{enumerate}
\item \textbf{Structural properties:} Characterize the degree distribution and graph diameter of the sequentially grown graph. While routing is logarithmic, the maximum degree and overall diameter depend on the accumulation of edges at record-breaking vertices and warrant precise asymptotic analysis.
\item \textbf{Directionless $K$-nearest neighbors in continuous space:} Analyze the variant where $n$ points are drawn i.i.d. uniformly from $[0,1]$ or $\mathbb{S}^1$, connecting each new point to its $K$ closest already-inserted neighbors based purely on Euclidean distance. The removal of left/right directional information introduces complex local dependencies.
\item \textbf{Higher-dimensional continuous domains:} Extend the sequential insertion model to the unit hypercube $[0,1]^d$ or $\mathbb{S}^d$ with uniform random points, where each new vertex connects to its $K$ nearest already-inserted neighbors.
\end{enumerate}

Motivated by these directions and empirical evidence from growth-based ANN structures~\cite{malkov2012scalable, malkov2014approximate, malkov2016growing}, we formalize the following conjecture for the continuous setting:
\begin{conjecture}
\label{conj:continuous-knn}
Let $n$ points be drawn i.i.d. uniformly from the unit circle $\mathbb{S}^1$ (or the interval $[0,1]$ with periodic boundary conditions). Construct a graph by sequentially inserting the points and connecting each new point to its $K$ nearest already-inserted neighbors. For every $\varepsilon > 0$, there exists a constant $K_0 = K_0(\varepsilon)$ such that for all $K \geq K_0$ and sufficiently large $n$, if $s$ and $t$ are chosen uniformly at random from the $n$ points, then with probability at least $1 - \varepsilon$:
\begin{enumerate}[label=(\roman*)]
\item the greedy walk from $s$ successfully reaches $t$, and
\item the walk completes in at most $C(\varepsilon) \log n$ steps, where $C(\varepsilon)$ is a constant depending only on $\varepsilon$.
\end{enumerate}
\end{conjecture}

We expect Conjecture~\ref{conj:continuous-knn} to extend to arbitrary dimensions $d \geq 1$, preserving logarithmic routing with a dimension-dependent constant. The principal technical barrier lies in the geometric structure of continuously grown graphs: without a natural linear ordering, the elegant permutation-record characterization breaks down, requiring new probabilistic and geometric tools to control local dependencies and long-range jump distributions.

From a network science perspective, our work complements the rigorous mathematical theory of preferential attachment~\cite{BollobasHandbook, Ostroumova2013}. While classical preferential attachment models successfully reproduce structural features like heavy-tailed degree distributions and clustering, they do not address decentralized navigability. By contrast, our sequentially grown nearest-neighbor graph demonstrates that algorithmic navigability can arise from purely local, distance-based growth dynamics. This highlights a fundamental divergence between structural realism and routing efficiency in network generation paradigms. The bridge we establish between geometric graph growth and permutation statistics may inform the analysis of dynamically evolving networks, the theoretical foundations of approximate nearest neighbor search, and models of spatial biological networks.

\section*{Acknowledgments}
The author thanks Valery Kalyagin for his inspiration and steadfast support.
Special gratitude is extended to Vladimir Krylov for introducing the author to this research area and for his ongoing guidance, and to Yury Malkov for our earlier collaborative work in this domain and for being the first to conceptualize the hierarchical structure that emerges in such graphs.
Much of this collaborative work took place at Meralabs, an environment that itself emerged largely due to the vision and efforts of the late Dmitry Ponomarev.
The author wishes to honor Dmitry's memory for his enthusiasm and shared ideas, which provided the initial spark and foundational direction for this work.
Additionally, the author is grateful to Andrey Sladkov for insightful discussions; in particular, his suggestion to revisit the simplest case proved instrumental in shaping the analytical approach of this paper.

The article was prepared within the framework of the Basic Research Program at the National Research University Higher School of Economics (HSE).

\bibliographystyle{plain}  
\bibliography{references}

\end{document}